\newcommand{\ignore}[1]{}
\newtheorem{prelem}{{\bf Theorem}}
\newtheorem{theorem}{Theorem}
\newtheorem{conjecture}[theorem]{Conjecture}
\newtheorem{lemma}[theorem]{Lemma}
\newtheorem{remarka}[theorem]{Remark}
\newenvironment{remark}{\begin{remarka}\rm}{\hfill\rule{2mm}{2mm}\end{remarka}}
\newtheorem{examplea}[theorem]{Example}
\newenvironment{example}{\begin{examplea}\rm}{\hfill\rule{2mm}{2mm}\end{examplea}}
\newtheorem{exercisea}[theorem]{Exercise}
\def\Z{\mathbb{Z}}
\def\cS{\mathcal{S}}
\def\N{\mathbb{N}}
\title{On the additive bases problem}
\author{
Hamed Hatami
\thanks{Supported by an NSERC grant.}
\\
School of Computer Science\\
McGill University, Montreal\\
\texttt{hatami@cs.mcgill.ca}
\and
Victoria de Quehen
\\
Department of Mathematics and Statistics\\
McGill University, Montreal\\
\texttt{dequehen@math.mcgill.ca}
}
\begin{document}
\maketitle

\begin{abstract}
%
%

We prove that if $G$ is an Abelian group and $A_1,\ldots,A_k \subseteq G$ satisfy $m A_i=G$ (the $m$-fold sumset), then $A_1+\ldots+A_k=G$ provided that $k \ge c_m \log n$. This generalizes a result of Alon, Linial, and Meshulam~[Additive bases of vector spaces over prime fields. \emph{J. Combin. Theory Ser. A}, 57(2):203--210, 1991]  regarding the so called additive bases. 

\end{abstract}

\section{Introduction}

Let $p$ be a fixed prime, and let $\Z_p^n$ denote the $n$-dimensional vector space over the field $\Z_p$. Given a multiset $B$ with elements from $\Z_p^n$, let 
$\cS(B) = \left\{\left. \sum_{b \in S} b \ \right| \ S \subseteq B \right\}$. The set $B$ is called an \emph{additive basis} if $\cS(B)=\Z_p^n$. 

Jaeger, Linial, Payan, and Tarzi~\cite{MR1186753} made the following conjecture and showed that if true, it would provide a beautiful generalization of many important results regarding nowhere zero flows. In particular the case $p=3$ would imply the weak $3$-flow conjecture, which has been proven only recently by Thomassen~\cite{MR2885433}.

\begin{conjecture}\cite{MR1186753}
\label{conj:JLPT}
For every prime $p$, there exists a constant $k_p$ such that the union (with repetitions) of any $k_p$ bases for $\Z_p^n$ forms an additive basis.
\end{conjecture}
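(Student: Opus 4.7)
This is a well-known open problem (the Jaeger--Linial--Payan--Tarzi conjecture), so any proof proposal is necessarily speculative; indeed, the main theorem stated in the abstract of this paper can be read as a partial generalization that still loses a $\log n$ factor. My plan would be to pursue the Fourier analytic strategy that yields the $O(\log n)$ bound of Alon, Linial, and Meshulam, and then try to eliminate the logarithmic factor.

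First, I would reduce the question to a statement about character sums. For bases $B_1,\ldots,B_k$ of $\Z_p^n$, the set $\cS(B_1\cup\cdots\cup B_k)$ coincides with the Minkowski sumset $\cS(B_1)+\cdots+\cS(B_k)$. By Fourier inversion on $\Z_p^n$, showing this equals the full group reduces to bounding, for each nontrivial character $\chi$, a product of the form
\[
\prod_{i=1}^k \widehat{1_{\cS(B_i)}}(\chi) \;=\; \prod_{i=1}^k \prod_{b \in B_i}\bigl(1+\chi(b)\bigr),
\]
and then taking a union bound over the $p^n-1$ nontrivial characters.

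Next, I would seek a per-basis decay lemma: for every nontrivial $\chi$ and every basis $B$, the product $\prod_{b\in B}|1+\chi(b)|$ is smaller than $2^n$ by a factor exponential in $n$, with base depending only on $p$. Combined with the union bound, such a lemma would immediately produce a constant $k_p$, independent of $n$, as required.

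The main obstacle, and the reason the conjecture is still open, is that no such uniform per-basis decay lemma can hold: given any nontrivial $\chi$, one can choose a basis $B$ so that $\chi(b)=1$ for all but one $b\in B$, yielding essentially no decay. A successful argument must therefore exploit that the bases $B_1,\ldots,B_k$ are chosen \emph{together}: for example, one would want to show that no single character is simultaneously near-trivial on many of the $B_i$'s. Turning this correlation intuition into a quantitative bound, or replacing the Fourier approach by a structural iteration in the style of Bogolyubov-type theorems, is precisely where the real difficulty lies.
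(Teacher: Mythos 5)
The statement you were asked about is a \emph{conjecture} (due to Jaeger, Linial, Payan, and Tarzi), not a theorem of this paper: the paper states it, attributes it, and leaves it open. There is therefore no proof in the paper to compare your attempt against, and your proposal --- as you yourself say explicitly --- is not a proof either. You have correctly diagnosed the situation: the Fourier/exponential-sum strategy you outline is essentially the first proof in the cited work of Alon, Linial, and Meshulam, and it yields only $k_p(n) \le c_p \log n$, not a constant $k_p$. The obstruction you identify is also the right one: a single basis $B$ can be chosen so that a given nontrivial character $\chi$ satisfies $\chi(b)=1$ for all but one $b \in B$, so no per-basis decay lemma uniform over characters can hold, and any successful argument must exploit the interaction between the $k$ bases (or between a character and \emph{all} of them simultaneously). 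The present paper does not attempt this; its main theorem instead generalizes the $O(\log |G|)$-type bound to arbitrary sets $A_i$ with $mA_i = G$ in an arbitrary finite Abelian group via the Pl\"unnecke--Ruzsa inequality, and its Example~\ref{ex:ConstructionLog} shows that in that generality the bound cannot be pushed below $\Theta(\log\log|G|)$ --- which does not refute the conjecture, since the sets in that example are not of the form $\cS(B)$ for bases $B$.

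One technical caveat in your sketch: $\prod_{b\in B}\bigl(1+\chi(b)\bigr)$ is the Fourier coefficient of the function counting the number of sub(multi)sets of $B$ with a given sum, not of the indicator $1_{\cS(B)}$; the standard argument works with this weighted count and shows it is positive everywhere, which suffices but should be stated that way. As written, your proposal is an accurate survey of why the problem is hard rather than a proof, and it should not be presented as establishing the conjecture.
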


Let us denote by $k_p(n)$ the smallest $k \in \N$ such that  the union  of any $k$ bases for $\Z_p^n$ forms an additive basis.  In~\cite{MR1111557} two different proofs are given to show that $k_p(n)\le c_p \log n$, where here and throughout the paper the logarithms are in base $2$. The first proof is based on exponential sums and yields the bound $k_p(n) \le 1+(p^2/2) \log 2pn$, and the second proof is based on an algebraic method and yields $k_p(n) \le (p-1) \log n + p-2$. As it is observed in~\cite{MR1111557}, it is easy to construct examples showing that $k_p(n) \ge p$, and in fact, to the best of our knowledge, it is quite possible that $k_p(n)=p$.

Let $G$ be an Abelian group, and for $A,B \subseteq G$, define the sumset $A+B=\{a+b \ | a \in A, b \in B\}$. For $A \subseteq G$ and $m \in \N$, let $mA=A+\ldots+A$ denote the $m$-fold sumset of $A$. Note that for a  basis $B$ of $\Z_p^n$, we have $(p-1) \cS(B) = \Z_p^n$.  On the other hand if $B= B_1 \cup \ldots \cup B_k$ is a union with repetitions of $k$ bases for $\Z_p^n$, then $\cS(B)=\cS(B_1)+\ldots+\cS(B_k)$. Hence  Theorem~\ref{thm:main} below is a generalization of the above mentioned theorem of Alon \emph{et al}~\cite{MR1111557}.

\begin{theorem}[Main theorem]
\label{thm:main}
Let $G$ be a finite Abelian group and suppose that  $A_1,\ldots,A_{2K} \subseteq G$ satisfy $m A_i=G$ for all $1\le i \le 2K$ where $K \ge  m \ln \log(|G|)$.  Then $A_1+\ldots+A_{2K}=G$. Moreover for $m=2$, it suffices to have  $K \ge \log \log(|G|)$.
\end{theorem}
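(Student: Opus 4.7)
The plan is to proceed via Fourier analysis on $G$. After Fourier inversion, showing $A_1+\cdots+A_{2K}=G$ reduces to proving
\[
\sum_{\chi \ne 1} \prod_{i=1}^{2K} \frac{|\hat{1}_{A_i}(\chi)|}{|A_i|} < 1,
\]
where $\hat{1}_{A_i}(\chi) = \sum_{a \in A_i} \overline{\chi(a)}$ and the sum runs over non-trivial characters of $G$; indeed, $(1_{A_1}*\cdots*1_{A_{2K}})(g)>0$ for every $g$ precisely when this Fourier error is strictly smaller than the main term $\prod_i |A_i|/|G|$.

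The hypothesis $mA_i = G$ enters through the following pointwise bound. Since $1_{A_i}^{*m}(g) \ge 1$ for every $g$, the function $1_{A_i}^{*m} - 1$ is nonnegative, with trivial Fourier coefficient $|A_i|^m - |G|$ and non-trivial coefficients $\hat{1}_{A_i}(\chi)^m$. As the Fourier transform of a nonnegative function satisfies $|\hat{f}(\chi)| \le \hat{f}(1)$, we deduce $|\hat{1}_{A_i}(\chi)|^m \le |A_i|^m - |G|$ for every $\chi \ne 1$, equivalently $\theta_i(\chi)^m \le 1 - |G|/|A_i|^m$ for the normalized quantity $\theta_i(\chi) := |\hat{1}_{A_i}(\chi)|/|A_i|$.

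To exploit all $2K$ factors simultaneously, I would pair the sets into $K$ pairs and apply the Cauchy--Schwarz inequality iteratively, each round doubling the exponent on the Fourier coefficients while halving the number of factors. After $O(\log K)$ rounds, the resulting expression is a sum of the form $\sum_\chi \theta_i(\chi)^{2^r}$ for large $r$, which is bounded by combining the pointwise estimate above with Parseval's identity $\sum_\chi |\hat{1}_{A_i}(\chi)|^2 = |G| \cdot |A_i|$. This iterated squaring is what produces the double-exponential improvement over the Alon--Linial--Meshulam bound: each Cauchy--Schwarz round squares the relevant error term, so an initial error bounded by a constant less than $1$ becomes $< 1/|G|$ after only $O(\log\log|G|)$ rounds. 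For $m=2$ the pointwise bound is already a square, so the pairing aligns perfectly with the Fourier estimate and yields the sharp threshold $K \ge \log\log|G|$.

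The main technical obstacle is the interplay between the pointwise Fourier bound (tightest when $|A_i|^m$ is close to $|G|$) and Parseval's identity (tightest when $|A_i|$ is close to $|G|$). Interpolating smoothly between these two regimes uniformly across all $A_i$ is what forces the multiplicative factor of $m$ in the general threshold $K \ge m \ln \log|G|$, and is the technical heart of the argument.
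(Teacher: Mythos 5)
There is a genuine gap, and it occurs already at your very first step: the inequality $\sum_{\chi\ne 1}\prod_{i=1}^{2K}|\hat 1_{A_i}(\chi)|/|A_i|<1$ is only a \emph{sufficient} condition for $A_1+\cdots+A_{2K}=G$ (the triangle inequality discards all cancellation among the characters), and it is in fact false under the hypotheses of the theorem, so no amount of Cauchy--Schwarz, Parseval, or your pointwise bound can establish it. Concretely, take $G=\Z_2^n$, let $V=\{x:x_1=x_2=0\}$, and set $A_1=\cdots=A_{2K}=A:=V\cup\{e_1,e_2,e_1+e_2\}$. Then $2A=G$, so the hypotheses hold with $m=2$ for every $K$ (and the conclusion holds trivially since $0\in A$). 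But for each of the three nontrivial characters $\chi_S$ with $\emptyset\ne S\subseteq\{1,2\}$ one computes $\hat 1_A(\chi_S)=2^{n-2}-1$, so in your notation $\theta_i(\chi_S)=(2^{n-2}-1)/(2^{n-2}+3)$, and hence
$\sum_{\chi\ne 1}\prod_i\theta_i(\chi)\ge 3\bigl(1-4/(2^{n-2}+3)\bigr)^{2K}>2.7$
for every $K=O(\log n)$. This pinpoints why the issue you defer to as the ``technical heart'' is not an interpolation difficulty but an actual obstruction: when $|A_i|$ is a constant fraction of $|G|$, your pointwise bound degenerates to $\theta_i(\chi)^m\le 1-O(|G|^{1-m})$, while Parseval only gives $\sum_{\chi\ne1}\theta_i(\chi)^2\le |G|/|A_i|-1$, which permits a bounded number of characters with $\theta_i$ essentially equal to $1$; three such characters already defeat the target inequality. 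Rescuing the Fourier route would require treating the large spectrum structurally (e.g.\ passing to a quotient group), which is a genuinely new argument not sketched in your proposal; it is consistent with the fact that the exponential-sum proof of Alon, Linial, and Meshulam only reaches $k\approx\log n$ rather than $\log\log|G|$.

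The paper's proof avoids Fourier analysis entirely and works with cardinalities. Applying the Pl\"unnecke--Ruzsa inequality with $A=A_i$, $B=A_1+\cdots+A_{i-1}$, and $k=m$ gives $|G|=|mA_i|\le\bigl(|A_1+\cdots+A_i|/|B|\bigr)^m|B|$, i.e.\ $|A_1+\cdots+A_i|\ge |G|^{1/m}|A_1+\cdots+A_{i-1}|^{(m-1)/m}$. Iterating yields $|A_1+\cdots+A_K|\ge |G|^{1-\lambda}|A_1|^{\lambda}$ with $\lambda=((m-1)/m)^K<1/\log|G|$, hence $|A_1+\cdots+A_K|>|G|/2$, and likewise for $A_{K+1}+\cdots+A_{2K}$; two subsets of a finite Abelian group each of size exceeding $|G|/2$ have sumset $G$ by pigeonhole. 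If you want to salvage your write-up, I would recommend switching to this sumset-cardinality framework rather than an $L^2$ one.
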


We present the proof of Theorem~\ref{thm:main} in Section~\ref{sec:proof}. While it is quite possible that Conjecture~\ref{conj:JLPT} is true, the following example shows that its generalization, Theorem~\ref{thm:main}, cannot be improved beyond $\Theta(\log \log |G|)$ even when $m=2$.

\begin{example}
\label{ex:ConstructionLog}
Let $n=2^k$ and for $i=1,\ldots,k$, let $C_i \subseteq  \Z_p^{2^i}$ be the set of vectors in $\Z_p^{2^i} \setminus \{\vec{0}\}$ in which the first half or the second half (but not both)  of the coordinates are all $0$'s. Note that $C_i+C_i=\Z_p^{2^i}$. Define $A_0=(\Z_p \setminus \{0\})^{2^k}$ and for $i=1,\ldots,k$, let 
$$A_i=\underbrace{C_i \times \ldots \times C_i}_{2^{k-i}} \subseteq \Z_p^n.$$ It follows from $C_i+C_i=\Z_p^{2^i}$ that $A_i+A_i=\Z_p^n$. On the other hand a simple induction shows that for $j \le k$, 
$$A_0+\ldots+A_j= (\Z_p^{2^j} \setminus \{\vec{0} \})^{2^{k-j}} \neq \Z_p^n.$$
\end{example}

\begin{remark}
Theorem~\ref{thm:main} in particular implies that $k_p(n) \le 2 (p-1) \ln n + 2 (p-1) \ln \log p$, and  $k_3(n) \le 2 \log n + 2$. Note that for $p >3$, the algebraic proof of~\cite{MR1111557} provides a slightly better constant, however unlike the theorem of~\cite{MR1111557}, Theorem~\ref{thm:main} can be applied to the case where $p$ is not necessarily a prime. 
\end{remark}

\section{Proof of Theorem~\ref{thm:main}}
\label{sec:proof}

The proof is based on the Pl\"unnecke-Ruzsa inequality. 

\begin{lemma}[Pl\"unnecke-Ruzsa]
\label{lem:PlunneckeRuzsa}
If $A,B$ are finite sets in an Abelian group satisfying $|A+B| \le \alpha |B|$, then 
$$|k A | \le \alpha^{k} |B|,$$
provided that $k > 1$. 
\end{lemma}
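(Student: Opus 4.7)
The plan is to follow Petridis's elegant proof of the Plünnecke--Ruzsa sumset inequality via the magnification ratio. First I would pick a non-empty subset $X \subseteq B$ minimizing the ratio $|A+X|/|X|$, and denote this minimum by $K$. Since $B$ itself is a candidate, $K \le |A+B|/|B| \le \alpha$, and minimality yields the key inequality $|A+X'| \ge K|X'|$ for every non-empty $X' \subseteq X$.

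The heart of the proof is the magnification lemma: for every finite set $C$ in the group,
$$|X+A+C| \le K|X+C|.$$
I would prove this by induction on $|C|$. The base case $|C|=1$ is immediate from $|X+A| = K|X|$. For the inductive step, write $C = C'\cup\{c\}$ with $c\notin C'$, and use the inclusion-exclusion identities
$$|X+A+C| = |X+A+C'| + |X+A| - |(X+A+C') \cap (X+A+c)|$$
and
$$|X+C| = |X+C'| + |X| - |(X+C') \cap (X+c)|,$$
noting that $|X+A+c|=|X+A|$ and $|X+c|=|X|$ by translation invariance. Setting $Y = (X+C')\cap(X+c)$, one has $Y+A \subseteq (X+A+C')\cap(X+A+c)$, and since $Y-c \subseteq X$ the minimality bound applied to $Y-c$ gives $|Y+A| = |(Y-c)+A| \ge K|Y|$. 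Combining the inductive hypothesis $|X+A+C'| \le K|X+C'|$, the equality $|X+A| = K|X|$, and the intersection lower bound $|(X+A+C') \cap (X+A+c)| \ge K|Y|$ yields $|X+A+C| \le K|X+C|$, closing the induction.

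With the magnification lemma in hand, I would apply it iteratively: for $k \ge 2$, taking $C = (k-1)A$ gives $|X+kA| \le K|X+(k-1)A|$, so induction on $k$ produces $|X + kA| \le K^k |X|$ (base case $|X+A| = K|X|$). Finally, $kA + b \subseteq kA + X$ for any fixed $b \in X$, so $|kA| \le |kA + X| \le K^k |X| \le \alpha^k |B|$, which is the desired conclusion.

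The main subtle point is the choice of the intersection set $Y$ and the observation that its shift $Y - c$ lands inside $X$; this is precisely what lets the extremal property of $X$ be invoked in the direction we need (a lower bound on $|Y+A|$ that we subtract), turning the inclusion-exclusion identity into the magnification inequality. Once this is set up, both inductions and the final comparison of $|kA|$ with $|X + kA|$ are formal bookkeeping.
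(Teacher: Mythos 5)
Your proof is correct. Note, however, that the paper itself offers no proof of this lemma: it is stated as the known Pl\"unnecke--Ruzsa inequality and used as a black box, so there is no in-paper argument to compare against. What you have written is a complete, self-contained proof along the lines of Petridis's argument via the minimal magnification ratio $K=|A+X|/|X|$ over non-empty $X\subseteq B$. Your inductive step is a slight variant of Petridis's original: he works with the set $Z=\{x\in X : x+A+c\subseteq X+A+C'\}$ and an inclusion of unions, whereas you take $Y=(X+C')\cap(X+c)$ directly from the inclusion-exclusion identity and use $Y+A\subseteq(X+A+C')\cap(X+A+c)$ together with $Y-c\subseteq X$ to invoke minimality; both routes close the induction, and yours checks out (including the trivial case $Y=\emptyset$). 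The final passage $|kA|\le|kA+X|\le K^k|X|\le\alpha^k|B|$ is correct, and in fact your argument also covers $k=1$, so the paper's restriction $k>1$ is not needed. The only pedantic caveat is that one must assume $B\neq\emptyset$ to select the extremal $X$, which is implicit in any sensible reading of the statement.
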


Next we present the proof  of Theorem~\ref{thm:main}. For $2 \le i\le K$, substituting $k=m$, $A=A_{i}$ and $B=A_1+\ldots+A_{i-1}$ in Lemma~\ref{lem:PlunneckeRuzsa}, we obtain 
$$|G|= |mA_i| \le \left(\frac{|A_1+\ldots+A_{i-1}+A_i|}{|A_1+\ldots+A_{i-1}|} \right)^m |A_1+\ldots+A_{i-1}|,$$
which simplifies to
$$|G|^{1/m}  |A_1+\ldots+A_{i-1}|^{\frac{m-1}{m}} \le |A_1+\ldots+A_{i-1}+A_i|.$$
Consequently 
$$|G|^{1-\lambda}  |A_1|^{\lambda} \le |A_1+\ldots+A_K|,$$
where $\lambda=\left(\frac{m-1}{m}\right)^K$. Since $K \ge  m \ln \log |G|$, we have $\lambda=\left(\frac{m-1}{m}\right)^K < e^{-K/m} \le 1/\log |G|$, and thus 
$|G|^\lambda < 2$ and   $|G|/2 <|A_1+\ldots+A_K|$. Similarly we obtain 
$$|G|/2 <  |A_{K+1}+\ldots+A_{2K}|.$$
Since $A+B=G$ if $|A|,|B| > |G|/2$, we conclude  
$$A_1+\ldots+A_{2K}=G.$$

Finally note that for $m=2$, we have $\lambda=2^{-K}$, and thus to obtain $|G|/2 < |G|^{1-\lambda}  |A_1|^{\lambda}$, it suffices to have $K \ge \log \log|G|$.

\section{Quasi-random Groups}

While Example~\ref{ex:ConstructionLog} shows that the bound of $\Theta(\log \log |G|)$ is essential in Theorem~\ref{thm:main}, for certain non-Abelian groups, it is possible to achieve the constant bound similar to what is conjectured in Conjecture~\ref{conj:JLPT}.  A finite group $G$ is called $D$-quasirandom if all non-trivial unitary representations of $G$ have dimension at least $D$. The terminology ``quasirandom group'' was introduced explicitly by Gowers in the fundamental paper~\cite{MR2410393} where he showed that the dense Cayley graphs in quasirandom groups are quasirandom graphs in the sense of Chung, Graham, and Wilson~\cite{MR1054011}. The group $\mathrm{SL}_2(\Z_p)$ is an example of a highly quasirandom group. The so called Frobenius lemma says that $\mathrm{SL}_2(\Z_p)$ is $(p-1)/2$-quasirandom. This has to be compared to the cardinality of this group,  $|\mathrm{SL}_2(\Z_p)|=p^3-p$. The basic fact that we will use about the quasirandom groups is the following theorem of Gowers (See also~\cite[Exercise 3.1.1]{MR3309986}). 

\begin{theorem}[{\cite{MR2410393}}]
\label{thm:GowersQuasirandom}
Let $G$ be a $D$-quasirandom finite group.  Then every $A,B,C \subseteq G$ with $|A||B||C| > |G|^3 /D$ satisfy $ABC=G$. 
\end{theorem}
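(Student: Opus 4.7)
The plan is to count, for each $g\in G$, the number $N(g)=|\{(a,b,c)\in A\times B\times C:abc=g\}|$ and to show that $N(g)>0$ whenever $|A||B||C|>|G|^3/D$. Since $N=1_A\ast 1_B\ast 1_C$ under the convolution $(f\ast h)(x)=\sum_y f(y)h(y^{-1}x)$, the natural machinery is non-abelian Fourier analysis. For each irreducible unitary representation $\pi$ of $G$ of dimension $d_\pi$, I define $\widehat{f}(\pi)=\sum_x f(x)\pi(x)$, so that $\widehat{f\ast h}(\pi)=\widehat{f}(\pi)\widehat{h}(\pi)$, and use the Peter--Weyl inversion formula
$$f(g)=\frac{1}{|G|}\sum_\pi d_\pi\,\mathrm{tr}\bigl(\widehat{f}(\pi)\pi(g^{-1})\bigr)$$
together with Plancherel's identity $\sum_x|f(x)|^2=\frac{1}{|G|}\sum_\pi d_\pi\|\widehat{f}(\pi)\|_{\mathrm{HS}}^2$.

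Applying inversion to $N$ and splitting off the trivial representation $\pi_0$, which contributes exactly $|A||B||C|/|G|$ because $\widehat{1_S}(\pi_0)=|S|$, gives
$$N(g)=\frac{|A||B||C|}{|G|}+\frac{1}{|G|}\sum_{\pi\ne\pi_0}d_\pi\,\mathrm{tr}\bigl(\widehat{1_A}(\pi)\widehat{1_B}(\pi)\widehat{1_C}(\pi)\pi(g^{-1})\bigr).$$
The task is to bound the error term strictly below the main term. Using the Schatten--Hölder inequalities $|\mathrm{tr}(XY)|\le\|X\|_{\mathrm{HS}}\|Y\|_{\mathrm{HS}}$ and $\|XY\|_{\mathrm{HS}}\le\|X\|_{\mathrm{op}}\|Y\|_{\mathrm{HS}}$, together with the unitarity of $\pi(g^{-1})$, each trace is bounded by $\|\widehat{1_A}(\pi)\|_{\mathrm{HS}}\|\widehat{1_B}(\pi)\|_{\mathrm{op}}\|\widehat{1_C}(\pi)\|_{\mathrm{HS}}$.

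The quasirandom hypothesis enters through the middle factor: for every non-trivial $\pi$ one has $d_\pi\ge D$, and restricting Plancherel to the single summand $\pi$ yields
$$\|\widehat{1_B}(\pi)\|_{\mathrm{op}}^2\le\|\widehat{1_B}(\pi)\|_{\mathrm{HS}}^2\le\frac{|G||B|}{d_\pi}\le\frac{|G||B|}{D}.$$
Pulling this uniform bound out of the sum and applying Cauchy--Schwarz followed by full Plancherel to the remaining $A$ and $C$ factors gives
$$\text{Error}\le\frac{1}{|G|}\sqrt{\tfrac{|G||B|}{D}}\Bigl(\sum_\pi d_\pi\|\widehat{1_A}(\pi)\|_{\mathrm{HS}}^2\Bigr)^{1/2}\Bigl(\sum_\pi d_\pi\|\widehat{1_C}(\pi)\|_{\mathrm{HS}}^2\Bigr)^{1/2}=\sqrt{\tfrac{|A||B||C||G|}{D}},$$
which is strictly less than $|A||B||C|/|G|$ precisely when $|A||B||C|>|G|^3/D$. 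Under this hypothesis, $N(g)>0$ for every $g\in G$, so $ABC=G$.

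The main obstacle is apportioning the Schatten norms correctly in the trace estimate: only one of the three Fourier factors can be bounded in operator norm to extract the quasirandom gain $1/\sqrt{D}$, while the other two must remain in Hilbert--Schmidt norm so that Plancherel aggregates them into $|G|\sqrt{|A||C|}$. Applying the operator-norm bound to two factors loses a further $\sqrt{D}$ and produces only the weaker threshold $|G|^3/D^2$, whereas staying in Hilbert--Schmidt throughout forfeits the quasirandom gain entirely.
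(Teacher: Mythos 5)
The paper quotes this theorem from Gowers's paper (and points to~\cite[Exercise 3.1.1]{MR3309986}) without reproducing a proof, so there is no internal argument to compare against. Your proof is correct and is the standard one: the convolution count $N=1_A\ast 1_B\ast 1_C$, Peter--Weyl inversion with the trivial representation split off, the trace bound $\|\widehat{1_A}(\pi)\|_{\mathrm{HS}}\|\widehat{1_B}(\pi)\|_{\mathrm{op}}\|\widehat{1_C}(\pi)\|_{\mathrm{HS}}$, the single-summand Plancherel bound $\|\widehat{1_B}(\pi)\|_{\mathrm{op}}^2\le |G||B|/D$ for non-trivial $\pi$, and Cauchy--Schwarz plus Plancherel on the remaining factors all check out, and the threshold $|A||B||C|>|G|^3/D$ falls out exactly as you compute. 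Your closing remark about which factor receives the operator-norm bound is also the right diagnosis of where the argument could go wrong; as written, it does not.
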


We will also need the noncommutative version of Ruzsa's inequality.  

\begin{lemma}[Ruzsa inequality]
\label{lem:Ruzsa}
Let $A,B,C \subseteq G$  be finite subsets of a group $G$. Then 
$$|A C^{-1}| \le \frac{|A B^{-1}||B C^{-1}|}{|B|}.$$
\end{lemma}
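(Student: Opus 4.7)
The plan is to prove the inequality by constructing an explicit injection
$$\phi : B \times (AC^{-1}) \longrightarrow (AB^{-1}) \times (BC^{-1}),$$
from which the inequality follows immediately by comparing cardinalities and dividing through by $|B|$. This is the classical ``Ruzsa triangle'' argument, and the whole point is to check that it goes through without commutativity.

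To define $\phi$, I would first fix, for each $d \in AC^{-1}$, one representation $d = \alpha(d)\, \gamma(d)^{-1}$ with $\alpha(d) \in A$ and $\gamma(d) \in C$; such a choice exists by the definition of $AC^{-1}$. Then set
$$\phi(b,d) = \bigl(\alpha(d)\, b^{-1},\ b\, \gamma(d)^{-1}\bigr),$$
which lies in $AB^{-1} \times BC^{-1}$ since $\alpha(d) \in A$, $b \in B$, and $\gamma(d) \in C$. The key step is to verify that $\phi$ is injective. Given an output pair $(x,y)$, multiplying the two coordinates \emph{in the given order} yields
$$xy \;=\; \alpha(d)\, b^{-1} \cdot b\, \gamma(d)^{-1} \;=\; \alpha(d)\, \gamma(d)^{-1} \;=\; d,$$
so $d$ is recovered from $(x,y)$. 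Since $\alpha(d)$ was fixed in advance, $b = x^{-1}\alpha(d)$ is then recovered from the first coordinate. Hence $\phi$ is injective, giving $|B|\cdot|AC^{-1}| \le |AB^{-1}|\cdot|BC^{-1}|$.

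There is no real obstacle here; the only point that requires care in the noncommutative setting is the placement of $b^{-1}$ immediately to the right of $\alpha(d)$ and $b$ immediately to the left of $\gamma(d)^{-1}$, so that the two $b$'s cancel in the correct order when we multiply $xy$. This uses associativity but not commutativity, which is precisely why the argument extends from the Abelian case to arbitrary groups.
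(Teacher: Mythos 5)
Your proof is correct and is essentially the paper's argument: the paper counts that each element $ac^{-1}$ of $AC^{-1}$ has at least $|B|$ distinct representations as a product $xy$ with $(x,y)\in (AB^{-1})\times(BC^{-1})$, which is exactly your injection $\phi$ written in counting language (and your recovery of $b$ from the first coordinate is the distinctness of those representations). No substantive difference.
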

\begin{proof}
The claims follows immediately from fact that by the identity $ac^{-1}=a b^{-1} bc^{-1}$,  every element $ac^{-1}$ in $AC^{-1}$ has at least $|B|$ distinct representations of the from $xy$ with 
$(x,y) \in (AB^{-1}) \times (BC^{-1})$.
\end{proof}

Finally we can state the analogue of  Theorem~\ref{thm:main} for quasi-random groups. 

\begin{theorem}
Let $G$ be a $|G|^{\delta}$-quasirandom finite group for some $\delta > 0$. If the sets $A_1,\ldots,A_{K} \subseteq G$ satisfy $A_i A_i^{-1}=G$ for all $1\le i \le K$ where $K > \log(3/\delta)$.  Then $A_1\cdot \ldots \cdot A_{3K}=G$.
\end{theorem}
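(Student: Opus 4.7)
The plan is to follow the same template as Theorem~\ref{thm:main}: use the noncommutative Ruzsa inequality (Lemma~\ref{lem:Ruzsa}) to show that each of the three blocks $A_1\cdots A_K$, $A_{K+1}\cdots A_{2K}$, $A_{2K+1}\cdots A_{3K}$ has size close to $|G|$, then conclude via Gowers' theorem (Theorem~\ref{thm:GowersQuasirandom}) in place of the elementary ``$|A|,|B|>|G|/2\Rightarrow A+B=G$'' step of the abelian proof.

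The first step will be to extract from Lemma~\ref{lem:Ruzsa} the following one-step bound: for every finite $S\subseteq G$,
\[|A_iS|^2 \ge |G|\cdot|S|.\]
This is obtained by substituting $A=C=A_i$, $B=S^{-1}$ into the Ruzsa inequality; the hypothesis $A_iA_i^{-1}=G$ forces $|AC^{-1}|=|G|$, while both $|AB^{-1}|$ and $|BC^{-1}|$ reduce to $|A_iS|$ after inverting the product.

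Next I would iterate. Starting from $|A_K|\ge\sqrt{|G|}$ (a consequence of $|A_KA_K^{-1}|\le|A_K|^2$), a trivial induction over the one-step bound gives $|A_jA_{j+1}\cdots A_K|\ge|G|^{\,1-2^{\,j-K-1}}$, and in particular $|A_1\cdots A_K|\ge|G|^{1-2^{-K}}$. The analogous bound holds for the two remaining blocks. Setting $X=A_1\cdots A_K$, $Y=A_{K+1}\cdots A_{2K}$, $Z=A_{2K+1}\cdots A_{3K}$, the hypothesis $K>\log(3/\delta)$ yields $3\cdot 2^{-K}<\delta$, hence
\[|X|\,|Y|\,|Z|\ge |G|^{\,3-3\cdot 2^{-K}} > |G|^{3-\delta} = \frac{|G|^3}{D},\]
and Theorem~\ref{thm:GowersQuasirandom} closes the argument by giving $XYZ=G$, i.e., $A_1\cdots A_{3K}=G$.

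The one moment of genuine (if mild) difficulty is the very first step: Lemma~\ref{lem:Ruzsa} naturally controls $|AB^{-1}|$, whereas we want a lower bound on the pure product $|A_iS|$. The substitution $B=S^{-1}$ is what converts one into the other, and spotting it is essential in the non-abelian setting, where $|A_iS|$ and $|A_iS^{-1}|$ are not interchangeable as they are in the abelian case. Once this substitution is in hand, the rest of the argument is a faithful non-abelian analogue of the proof of Theorem~\ref{thm:main}, with the exponent-halving recursion $\epsilon\mapsto\epsilon/2$ playing the same role it plays there.
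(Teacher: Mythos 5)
Your proof is correct and follows essentially the same route as the paper: iterate the Ruzsa inequality (Lemma~\ref{lem:Ruzsa}) to show each of the three blocks of $K$ factors has size at least $|G|^{1-2^{-K}} > |G|^{1-\delta/3}$, then apply Theorem~\ref{thm:GowersQuasirandom}. The only difference is your substitution $A=C=A_i$, $B=S^{-1}$, which grows the product from the right and uses the hypothesis in exactly the form $A_iA_i^{-1}=G$, whereas the paper substitutes $A=C=A_i^{-1}$ and grows the product from the left, which as written produces $|A_i^{-1}A_i|$ rather than $|A_iA_i^{-1}|$ --- so your version is, if anything, slightly more careful on this point.
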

\begin{proof}
Obviously $|A_1| \ge |G|^{1/2}$. For $2 \le i\le K$, substituting $A=C=A_i^{-1}$ and $B=A_1 \cdot \ldots \cdot A_{i-1}$ in Lemma~\ref{lem:Ruzsa}, we obtain 
$$\sqrt{|G||A_1 \cdot \ldots \cdot A_{i-1}|} \le |A_1 \cdot \ldots \cdot A_{i}|,$$
which in turn shows 
$$|G|^{1-2^{-K}} \le |A_1 \cdot \ldots \cdot A_{K}|.$$
Since $K > \log(3/\delta)$, we have 
$$|G| |G|^{-\delta/3} < |A_1\cdot \ldots \cdot A_K|.$$
We obtain a similar bound for $|A_{K+1} \cdot \ldots \cdot A_{2K}|$ and $|A_{2K+1} \cdot \ldots \cdot A_{3K}|$, and the result follows from Theorem~\ref{thm:GowersQuasirandom}.
\end{proof}
\begin{remark}
Note that in particular for $G=\mathrm{SL}_2(\Z_p)$, if $p \ge 7$, and $A_1,\ldots,A_{12} \subseteq G$ satisfy $A_i A_i^{-1}=G$, then $A_1 \ldots A_{12}=G$. 
\end{remark}

\section*{Acknowledgements.}
We would like to thank  Kaave Hosseini, Nati Linial, and Shachar Lovett for valuable discussions about this problem.

\bibliographystyle{alpha}
\bibliography{note}

\end{document}